\Crefname{paragraph}{Section}{Sections}
\newcommand{\ensemblenombre}[1]{\mathbb{#1}}
\newcommand{\N}{\ensemblenombre{N}}
\newcommand{\Z}{\ensemblenombre{Z}}
\newcommand{\R}{} 
\renewcommand{\R}{\ensemblenombre{R}}
\newcommand{\C}{\ensemblenombre{C}}
\newcommand{\T}{\ensemblenombre{T}}
\newcommand{\D}{\ensemblenombre{D}}
\newcommand{\Pl}{\ensemblenombre{P}}
\newcommand{\norme}[1]{\left\lVert#1\right\rVert}
\newcommand{\Sp}{\mathrm{Sp}}
\newcommand{\dive}[1]{\mathrm{div}}
\theoremstyle{plain} 
\newtheorem{prop}{Proposition}[section] 
\newtheorem{theo}[prop]{Theorem}
\theoremstyle{definition}
\newtheorem{rmk}[prop]{Remark}
\let\original@addcontentsline\addcontentsline
\newcommand{\dummy@addcontentsline}[3]{}
\newcommand{\DeactivateToc}{\let\addcontentsline\dummy@addcontentsline}
\newcommand{\ActivateToc}{\let\addcontentsline\original@addcontentsline}
\begin{document}

\title{Approximate controllability of the linearized Boussinesq system in a two dimensional channel}
\author{Kévin Le Balc'h}

\maketitle
\begin{abstract}
In this paper, we prove an approximate controllability result for the linearized Boussinesq system around a fluid at rest, in a two dimensional channel, when the control acts only on the temperature, through the upper boundary. This result can be seen as a first step to obtain an open loop stabilization result of the nonlinear Boussinesq system, in the spirit of the article \cite{CE19} by Chowdhury, Ervedoza concerning the Navier Stokes equations. The proof relies on the well-known Fattorini criterion, i.e. we show an unique continuation property for the adjoint system, by expanding the solution in Fourier series and using ordinary differential equations arguments. More precisely, we prove that the spectrum of the adjoint operator splits into two parts corresponding respectively to the Stokes eigenvalues and the Dirichlet Laplacian eigenvalues. Whereas the second part can be treated easily thanks to the well-known form of the eigenfunctions, the first part requires to show that a matrix of size three, depending analytically of the parameters of the problem, is “generically” invertible.
\end{abstract}
\small
\tableofcontents
\normalsize

\section{Introduction}
Let $ \Omega = \T \times (0,L)$, where $\T$ is the one-dimensional torus, identified with the interval $(0,2\pi)$ with periodic boundary conditions in the $x_1$ variable, and $L \in (0,+\infty)$. In this setting, we consider an incompressible fluid, with velocity field $u= u(t,x_1,x_2) = (u_1(t,x_1,x_2), u_2(t,x_1,x_2)) \in \R^2$ and pressure $p= p(t,x_1,x_2) \in \R$, satisfying the Navier-Stokes equation coupled through a source term with the temperature $\theta = \theta(t,x_1,x_2) \in \R$, satisfying a heat equation with a convection term. This corresponds to the Boussinesq approximation
\begin{equation}
\label{eq:boussinesq}
\left\{
\begin{array}{l l}
\partial_t u + (u \cdot \nabla) u - \nu  \Delta u + \nabla p = \theta e_2 &\mathrm{in}\ (0,\infty)\times\Omega,\\
\text{div}\ u = 0&\mathrm{in}\ (0,\infty)\times\Omega,\\
u(t,x_1,0) = u(t,x_1,L) = 0 &\mathrm{on}\ (0,\infty)\times\T,\\
\partial_t \theta - \alpha \Delta \theta + u \cdot \nabla \theta =0 &\mathrm{in}\ (0,\infty)\times\Omega,\\
\theta(t,x_1,0)= 0,\ \theta(t,x_1,L)= h(t,x_1)&\mathrm{on}\ (0,\infty)\times\T,\\
(u,\theta)(0,\cdot)=(u_0,\theta_0)& \mathrm{in}\ \Omega.
\end{array}
\right.
\end{equation}
Here, we have used the notation $e_2$, for the second vector of the canonical basis of $\R^2$, i.e. $e_2 = (0,1)^T$. Moreover, we assume that the viscosity coefficient $\nu$ of the fluid and the diffusion coefficient $\alpha$ of the temperature belong to $(0,+\infty)$. \\
\indent In the control system \eqref{eq:boussinesq}, at time $t \in (0,+\infty)$, $(u(t,\cdot), p(t,\cdot), \theta(t, \cdot))$ is the \textit{state} and $h(t,\cdot)$ is the \textit{boundary control}, acting only on the component $\theta$ through the upper boundary.\\
\begin{figure}
\label{fig1}
    \center
    \includegraphics[width=8cm]{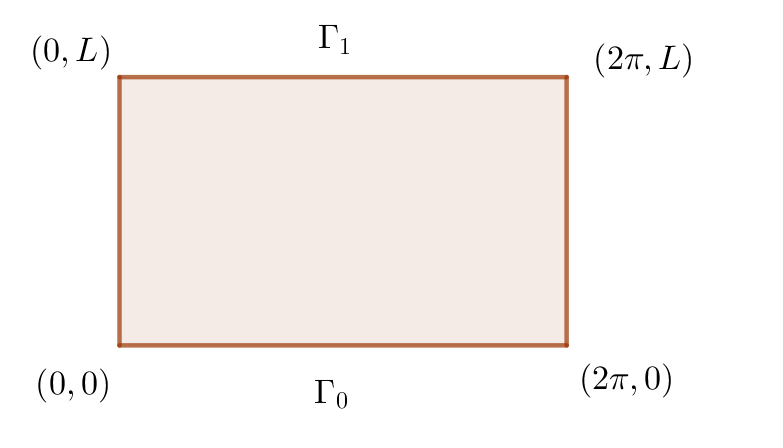}
    \caption{Two-dimensional channel $\Omega$}
\end{figure}
\indent Let us remark that a particular stationary solution of \eqref{eq:boussinesq} is $(u,p,\theta,h) = 0$, which corresponds to a fluid at rest. The linearization around $0$ of \eqref{eq:boussinesq} gives the following linear control system
\begin{equation}
\label{eq:boussinesqLin}
\left\{
\begin{array}{l l}
\partial_t u - \nu  \Delta u + \nabla p = \theta e_2 &\mathrm{in}\ (0,\infty)\times\Omega,\\
\text{div}\ u = 0&\mathrm{in}\ (0,\infty)\times\Omega,\\
u(t,x_1,0) = u(t,x_1,L) = 0 &\mathrm{on}\ (0,\infty)\times\T,\\
\partial_t \theta - \alpha \Delta \theta =0 &\mathrm{in}\ (0,\infty)\times\Omega,\\
\theta(t,x_1,0)= 0,\ \theta(t,x_1,L)= h(t,x_1)&\mathrm{on}\ (0,\infty)\times\T,\\
(u,\theta)(0,\cdot)=(u_0,\theta_0)& \mathrm{in}\ \Omega.
\end{array}
\right.
\end{equation}
We remark that the system \eqref{eq:boussinesqLin} couples a Stokes equation with a heat equation.\\
\indent If we expand $u$ into Fourier series, $u = \sum_{k \in \Z} u^k(t, x_2) e^{ikx_1}$, then the $0$-mode $u^0$ defined by 
\begin{equation}
u^0(t,x_2) = \int_{\T} u(t,x_1,x_2) dx_1 = \begin{pmatrix} u_{1}^0(t,x_2) \\
u_{2}^0(t,x_2) \end{pmatrix},
\end{equation}
satisfies the uncontrolled equation
\begin{equation}
\label{eq:boussinesqLin0}
\left\{
\begin{array}{l l}
\partial_t u_{1}^0 - \nu  \partial_{22} u_{1}^0 = 0&\mathrm{in}\ (0,\infty)\times\Omega,\\
u_{1}^0 (t,0) = u_{1}^0 (t,L) = 0 &\mathrm{in}\ (0,\infty),\\
u_{1}^0 (0,x_2)= \int_{\T} u_1^0(x_1,x_2) dx_1 &\mathrm{in}\ (0,L),\\
u_{2}^0(t,x_2) = 0 &\mathrm{in}\ (0,\infty)\times\Omega.
\end{array}
\right.
\end{equation}
Then, the goal of this article is to prove an approximate controllability result for the system \eqref{eq:boussinesqLin} for solutions, without $0$-mode, see \Cref{th:mainresult1} below.\\
\indent Roughly speaking, \eqref{eq:boussinesqLin} is approximately controllable because it is a cascade system. Indeed, the control $h$ acts on the component $\theta$, through the upper boundary, then $\theta$ indirectly controls the component $u_2$ thanks to the coupling term $\theta e_2$ then $u_2$ indirectly controls the component $u_1$ thanks to the incompressibility conditions $\partial_1 u_1 = - \partial_2 u_2$.
\section{Main result}
\subsection{Functional framework}
In this part, we recall some basic facts on the linearized system \eqref{eq:boussinesqLin} and give a modal description adapted to our setting. We will often deal with functions $u$ defined in $\Omega$, taking values in $\R^2$, hence belonging to functional spaces of the form $L^2(\Omega)^2$, or $H^1(\Omega)^2$. To simplify notations, we will simply denote by $\bold{L}^2(\Omega)$ and $\bold{H}^1(\Omega)$ the spaces $L^2(\Omega)^2$ and $H^1(\Omega)^2$ respectively.\\
\indent Recall that $\Omega = \T \times (0,L)$. For convenience, we define $\Gamma_0 = \{(x_1,0)\ ;\ x_1 \in \T\}$, $\Gamma_1 = \{(x_1,L)\ ;\ x_1 \in \T\}$ the lower and upper boundaries, and $\Gamma  = \Gamma_0 \cup \Gamma_1$, see Figure \ref{fig1}. We introduce the spaces
\begin{align}
\label{eq:defv0}\bold{V}^0(\Omega) &= \{ u =(u_1,u_2) \in \bold{L}^2(\Omega)\ ;\ \text{div}\ u = 0,\ \langle u \cdot n, 1 \rangle_{H^{-1/2}(\Gamma), H^{1/2}(\Gamma)} = 0 \},\\
\label{eq:defv1}\bold{V}_n^0(\Omega) &= \{ u \in \bold{V}^0(\Omega)\ ;\ u \cdot n = 0\ \text{on}\ \Gamma\},\\
\bold{X} &= \bold{V}_n^0(\Omega) \times L^2(\Omega),\\
\bold{V}^{1}(\Omega) & = \{u = (u_1,u_2) \in \bold{H}^1(\Omega)\ ;\ \text{div}\ u = 0\ \text{in}\ \Omega\},\\
\bold{V}_0^{1}(\Omega) & = \{u \in \bold{V}^{1}(\Omega)\ ;\ u(x_1,0)=u(x_1,L) = 0\ \text{for}\ x_1 \in \T\}.
\end{align}
The definitions of \eqref{eq:defv0} and \eqref{eq:defv1} are justified in \cite[Chapter IV, Section 3.2]{BF13}. We also introduce the Leray operator $\Pl$ as the orthogonal projection operator from $\bold{L}^2(\Omega)$ onto $\bold{V}_n^0(\Omega)$, see  \cite[Chapter IV, Section 3.3]{BF13}. The Boussinesq operator is then given by
\begin{equation}
\label{eq:defA}
A = A_0 + A_1 = \begin{pmatrix} 
\nu \Pl \Delta &(0)\\
(0) & \alpha \Delta \end{pmatrix} 
+ \begin{pmatrix} 0 & 0 & 0\\
0 & 0 & 1\\
0 & 0 & 0\end{pmatrix} = \begin{pmatrix} 
A_{S} & A_{C} \\
(0) & A_{L} \end{pmatrix}, 
\end{equation}
\begin{equation}
\ \text{with domain}\ D(A) = H^2(\Omega)^3 \cap \left(\bold{V}_0^1(\Omega) \times H_0^1(\Omega)\right)\ \text{on}\ \bold{X}.
\end{equation}
Indeed, the domain of the Stokes operator $A_S = \nu \Pl \Delta$ is given in \cite[Chapter IV, Section 5.2, Proposition IV.5.9]{BF13}, the domain of the Dirichlet Laplacian operator $A_L = \alpha \Delta$ is given in \cite[Section 9.6, Theorem 9.25]{Bre11} and $A_C \theta = (0, \theta)^T$ is a bounded operator, $A_C \in \mathcal{L}(L^2(\Omega);\bold{L}^2(\Omega))$.\\
\indent We now briefly describe the functional setting adapted to the linearized equations \eqref{eq:boussinesqLin}. To put the system in an abstract from, we introduce the Dirichlet operator $\D \in \mathcal{L}(L^2(\T) ; H^1(\Omega))$ defined by
\begin{equation}
\D h = w,\ \text{with}\ 
\left\{
\begin{array}{l l}
 - \alpha \Delta w =0 &\mathrm{in}\ \Omega,\\ 
w = 0&\mathrm{on}\ \Gamma_0,\\
w = h&\mathrm{on}\ \Gamma_1.
\end{array}
\right.
\end{equation}
Then the linearized equations of \eqref{eq:boussinesqLin} can be rewritten in the following abstract form
\begin{equation}
\label{eq:boussinesqabstract}
\left\{
\begin{array}{l l}
\Pl u' = \tilde{A_S} \Pl u + A_{C} \theta &\mathrm{for}\ t > 0,\\ 
(I - \Pl) u = 0 &\mathrm{for}\ t \geq 0,\\
\theta' = \tilde{A_L} \theta + (-\tilde{A_L}) \D h &\mathrm{for}\ t > 0,\\
(\Pl u, \theta)(0) = (\Pl u_0, \theta_0),
\end{array}
\right.
\end{equation}
where $\tilde{A_S}$ is the extension to the space $(D(A_S))'$ of the unbounded operator $A_S$ with domain $D(\tilde{A_S}) = \bold{V}_n^0(\Omega)$ and $\tilde{A_L}$ is the extension to the space $(D(A_L))'$ of the unbounded operator $A_L$ with domain $D(\tilde{A_L}) = H^1(\Omega)$, defined by the extrapolation method, see \cite[Chapter 10]{TW09}, \cite{Tri95}. Therefore, the control operator in \eqref{eq:boussinesqabstract} is admissible in the sense of \cite[Chapter 4, Section 4.2]{TW09} and reads as 
\begin{align}
B  :&\ L^2(\T) \longrightarrow D(A)'\\
 &\  h\qquad \longmapsto (0,0,(-\tilde{A_L})\D h).\label{eq:defB}
\end{align}
The study of the Cauchy problem for \eqref{eq:boussinesqabstract} is done in \Cref{sec:hypotheses}. For every $(u_0, \theta_0) \in \bold{X}$ and $h \in L^2(0,T;L^2(\T))$, equation \eqref{eq:boussinesqabstract} admits a unique weak solution in $C([0,T];\bold{X})$.
\subsection{Approximate controllability of the linearized system}
The goal of this section is to state our main result. As we have remarked in \eqref{eq:boussinesqLin0}, we cannot expect to control the $0$-mode of the solution. Therefore, we will work in the functional spaces
\begin{align}
\dot{\bold{V}_n^{0}}(\Omega)&= \left\{ u \in \bold{V}_n^{0}(\Omega) \ ;\ \int_{\T} u(x_1,x_2) dx_1 = 0,\ \forall x_2 \in (0,L)\right\} ,\\
\dot{L^2}(\Omega) &=\left\{ \theta \in  L^2(\Omega)\ ;\  \int_{\T} \theta(x_1,x_2) dx_1 = 0,\ \forall x_2 \in (0,L)\right\}  . 
\end{align}
Our main result is the following one.
\begin{theo}
\label{th:mainresult1}
Let $\nu \in (0,+\infty)$. There exists a countable subset $\mathcal{N}$ of $(0,\nu)$ such that for every diffusion coefficient $\alpha \in (0, \nu) \setminus \mathcal{N}$, for any positive time $T>0$, $\varepsilon >0$, initial data $(u_0, \theta_0) \in \dot{\bold{V}_n^{0}}(\Omega)\times \dot{L^2}(\Omega)$ and target $(u_1, \theta_1) \in \dot{\bold{V}_n^{0}}(\Omega)\times \dot{L^2}(\Omega)$, there exists a control $h \in L^2(0,T;L^2(\T))$ such that the associated solution $(u, \theta)$ of \eqref{eq:boussinesqLin} satisfies
\begin{equation}
\norme{(u,\theta)(T,\cdot) - (u_1,\theta_1)}_{\bold{V}_n^{0}(\Omega) \times L^2(\Omega)} \leq \varepsilon.
\end{equation}
\end{theo}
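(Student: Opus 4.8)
The plan is to apply the Fattorini--Hautus criterion for approximate controllability: the system \eqref{eq:boussinesqabstract} is approximately controllable at some (hence any) positive time if and only if the only $(\varphi,\psi)$ in the domain of the adjoint operator $A^*$ satisfying $A^*(\varphi,\psi) = \lambda(\varphi,\psi)$ and $B^*(\varphi,\psi)=0$ for some $\lambda \in \C$ is $(\varphi,\psi)=(0,0)$. Since the semigroup generated by $A$ is analytic and $A$ has compact resolvent, this unique continuation property for eigenfunctions is equivalent to the approximate controllability stated in \Cref{th:mainresult1}. The first step is therefore to compute $A^*$ and $B^*$ explicitly. Because of the block triangular structure of $A = \begin{pmatrix} A_S & A_C \\ (0) & A_L \end{pmatrix}$, the adjoint is block lower triangular, coupling the heat component into the Stokes component; and $B^*(\varphi,\psi)$ is, up to a constant, the normal trace $\alpha\,\partial_n \psi$ on the upper boundary $\Gamma_1$ (the co-normal derivative of the heat component), by the usual computation for boundary control of the heat equation via the Dirichlet operator $\D$.

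Next I would expand an eigenfunction $(\varphi,\psi)$ in Fourier series in the $x_1$ variable, $\varphi = \sum_{k} \varphi^k(x_2) e^{ikx_1}$, $\psi = \sum_k \psi^k(x_2) e^{ikx_1}$, working only with modes $k \neq 0$ since we are in the spaces $\dot{\bold V_n^0}(\Omega) \times \dot{L^2}(\Omega)$. For each fixed $k$ the eigenvalue problem becomes a system of linear ODEs in $x_2 \in (0,L)$ with constant coefficients, together with the boundary conditions coming from $D(A)$ and the extra constraint $\partial_n \psi^k(L) = 0$ from $B^*(\varphi,\psi)=0$. The key structural fact, announced in the abstract, is that the spectrum of $A^*$ restricted to each mode decouples: either $\lambda$ is a Dirichlet--Laplacian eigenvalue $-\alpha(k^2 + n^2\pi^2/L^2)$, in which case $\psi^k$ is (a multiple of) $\sin(n\pi x_2/L)$ and the overdetermination $\partial_n\psi^k(L)=0$ forces $\psi^k \equiv 0$, whence $\varphi^k$ solves a homogeneous Stokes eigenvalue problem and the classical unique continuation / non-degeneracy of Stokes eigenfunctions (they cannot have vanishing co-normal data) kills $\varphi^k$ as well; or else $\lambda$ is a Stokes eigenvalue, in which case $\psi^k$ is a genuinely coupled solution and one must show it vanishes. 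In the Stokes case, solving the heat ODE $\alpha(\psi^k)'' - \alpha k^2 \psi^k + \lambda \psi^k$ with $\psi^k(0)=\psi^k(L)=0$ and the extra condition $\partial_n\psi^k(L)=0$ generically forces $\psi^k=0$ unless $\lambda$ happens to coincide with a Dirichlet eigenvalue for the shifted operator; this is exactly where the countable exceptional set $\mathcal N$ enters, and the hypothesis $\alpha < \nu$ is used to guarantee the relevant eigenvalues are simple and separated.

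The heart of the argument, and the main obstacle, is the Stokes part of the spectrum once $\psi^k$ has been shown to be nontrivial: here $\varphi^k$ satisfies an inhomogeneous Stokes system with right-hand side built from $\psi^k$, subject to the Dirichlet boundary conditions $\varphi^k(0)=\varphi^k(L)=0$ and the divergence-free constraint. Writing the general solution of the fourth-order ODE governing $\varphi_2^k$ as a combination of exponentials $e^{\pm |k| x_2}$, $e^{\pm r x_2}$ with $r = r(\lambda,\nu,k)$, plus a particular solution proportional to the exponential appearing in $\psi^k$, the boundary conditions produce a linear system whose solvability is controlled by a $3\times 3$ determinant $\mathcal D(\lambda,\nu,\alpha,k)$ depending analytically on the parameters. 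One must show this determinant does not vanish identically: I would do this by a limiting or degeneration argument — e.g. examining the behaviour as $\alpha \to 0$, or as $|k| \to \infty$, or at a special value of $\lambda$ where the expression simplifies — to exhibit at least one parameter choice where $\mathcal D \neq 0$. Analyticity then implies $\mathcal D$ vanishes only on a proper analytic subset, and intersecting over all $k \neq 0$ the set of $\alpha \in (0,\nu)$ for which some $\mathcal D(\cdot,\nu,\alpha,k)$ degenerates is a countable set $\mathcal N$; for $\alpha \notin \mathcal N$ the only solution is $\varphi^k \equiv 0$, and summing over $k$ gives $(\varphi,\psi)=(0,0)$, completing the Fattorini criterion and hence the proof. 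The delicate points are the bookkeeping of which exponents coincide (forcing case distinctions and resonant particular solutions) and the explicit verification that $\mathcal D \not\equiv 0$, which is genuinely computational and cannot be bypassed.
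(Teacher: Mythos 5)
Your overall strategy --- Fattorini--Hautus criterion, Fourier decomposition in $x_1$, a case split between Dirichlet--Laplacian and Stokes eigenvalues, and an analyticity argument producing a countable exceptional set $\mathcal{N}$ --- is the same as the paper's. But there is a genuine structural error that propagates through both of your cases: you have the coupling in the adjoint system in the wrong direction. Since $A=\begin{pmatrix} A_S & A_C\\ 0 & A_L\end{pmatrix}$, the adjoint eigenvalue problem consists of a \emph{homogeneous} Stokes system for $(\psi,q)$ together with the heat equation $\lambda\xi-\alpha\Delta\xi=\psi_2$; the velocity feeds into the temperature, not the other way around. This matters everywhere. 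In your Dirichlet case, once the observation kills the temperature component you appeal to a ``non-degeneracy of Stokes eigenfunctions (they cannot have vanishing co-normal data)'' to kill the velocity; no such statement is available (nothing in $B^*\Phi=0$ observes the velocity), and none is needed: in the correct adjoint, $\xi_k\equiv 0$ forces $\psi_{2,k}=(\lambda+\alpha k^2)\xi_k-\alpha\xi_k''\equiv 0$ directly, and then the divergence-free condition and the boundary data give $\psi_{1,k}=q_k=0$. In your Stokes case, you locate the exceptional set $\mathcal{N}$ in a possible degeneracy of the \emph{homogeneous} heat ODE for the temperature with data vanishing at $0$ and $L$ plus $\partial_n\xi_k(L)=0$; but that overdetermined problem never degenerates (two Cauchy data at $x_2=L$ for a second-order ODE force the solution to vanish), and in any event the relevant equation is inhomogeneous with source $\psi_{2,k}\neq 0$. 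The actual difficulty, and the actual source of $\mathcal{N}$, is elsewhere: eliminating $\psi_{2,k}$ yields a sixth-order ODE for $\xi_k$ with characteristic roots $\pm k,\pm\mu_1,\pm\mu_2$, and one must show that the single observation $\xi_k'(L)=0$, combined with the natural boundary conditions, yields full Cauchy data for $\xi_k$ at $x_2=L$. The paper does this by multiplying by solutions $f=A\sinh(kx)+B\sinh(\mu_1 x)+C\sinh(\mu_2 x)$ of the same ODE and integrating by parts, which reduces the question to the invertibility of the $3\times3$ matrix $R$ in \eqref{eq:defR}; the set $\mathcal{N}$ is exactly the set of $\alpha$ for which $\det R$ vanishes at some Stokes eigenvalue, and $\alpha<\nu$ is used to make $\mu_2$ purely imaginary so that $\det R$, as an analytic function of $\tilde{\mu_2}$, can be shown nontrivial near infinity via Rouch\'e's theorem --- not to ``separate eigenvalues'' as you suggest.

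Two smaller points. First, the sufficiency of the Fattorini test does not follow from analyticity of the semigroup and compactness of the resolvent alone; one also needs completeness of the root vectors of $A$ (hypothesis \eqref{hyp2}, verified in the paper via Keldysh's theorem), which you omit. Second, your ``heart of the argument'' --- an inhomogeneous Stokes system for the velocity with right-hand side built from the temperature --- describes the forward system rather than the adjoint, so the boundary-value determinant you propose to study is not the one that controls unique continuation here.
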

\begin{rmk}
Let us make some comments about \Cref{th:mainresult1}.
\begin{itemize}
\item We can give an explicit description of the subset $\mathcal{N}$, see the proof of \Cref{prop:uniquecontinuation}, \eqref{eq:defN}.
\item We do not know if the condition $\alpha \in (0, \nu) \setminus \mathcal{N}$ is a necessary condition for obtaining \Cref{th:mainresult1}. On the one hand, we conjecture that $\alpha < \nu$ is not necessary and is purely technical, see \Cref{rmk:restrictionalpha} below. On the other hand, we are convinced that for some diffusion coefficient $\alpha$, the unique continuation property proved in \Cref{prop:uniquecontinuation}, see below, can be violated. Indeed, such a phenomenon already appeared in the context of controllability of linear $2 \times 2$ parabolic systems, when the control only acts on a part of the boundary for one component, see \cite{FCGBdT10} or \cite[Section 4, Proposition 1 and Remark 11]{AKBGBdT10}.
\item By adding another control on the lower boundary for the temperature, i.e. by replacing the fifth equation of \eqref{eq:boussinesqLin}, by $\theta(t,x_1,0)= h_1(t,x_1),\ \theta(t,x_1,L)= h_2(t,x_1)$ on $(0,+\infty)\times\T$, of course \Cref{th:mainresult1} still holds. The fact that we can drop or not the same restriction on the diffusion coefficient $\alpha$ and preserves the approximate controllability is an interesting open question.
\item The approximate controllability of \eqref{eq:boussinesqLin} holds with two controls $u_2(t,x_1,L) = h_1(t,x_1)$ and $\theta(t, x_1,L) = h_2(t,x_1)$ on $(0,+\infty)\times\T$ without restriction on the diffusion coefficient $\alpha \in (0,+\infty)$, see \Cref{rmk:twocontrols} below.
\end{itemize}
\end{rmk}
From a physical point of view, \Cref{th:mainresult1} says that one is able to control the velocity and the temperature by acting only on the temperature though the upper boundary, i.e. by heating or cooling, to get closed to any state. As a consequence, this result is encouraging and is a first step to have practical applications in the near future to regulate the temperature in a room for instance.

\section{Proof of the approximate controllability}
\subsection{Hypotheses on the abstract control system}
\label{sec:hypotheses}
The goal of this section is to prove that the abstract control system \eqref{eq:boussinesqabstract} is well-posed and more precisely satisfies the hypotheses $(\mathcal{H}1), (\mathcal{H}2), (\mathcal{H}3), (\mathcal{H}4), (\mathcal{H}5)$ required by \cite{BT14}. In the following proposition, we use the notations of \cite{BT14}.
\begin{prop}
\label{prop:hypotheses}
Let $A$, $B$ be defined by \eqref{eq:defA} and \eqref{eq:defB} respectively.
\begin{equation}
\label{hyp1}
\text{The spectrum of}\ A\ \text{consists of isolated eigenvalues}\ (\lambda_j)\ \text{with finite algebraic multiplicity.}
\tag{i}
\end{equation}
\begin{equation}
\label{hyp2}
\text{The family of root vectors of}\ A\ \text{is complete in}\ \bold{X}.
\tag{ii}
\end{equation}
\begin{equation}
\label{hyp3}
\text{The operator}\ A=A_0+A_1\ \text{generates an analytic semigroup on}\ \bold{X}.
\tag{iii}
\end{equation}
\begin{equation}
\label{hyp4}
\tag{iv}
\text{The interpolation inequalities are satisfied}\ D((\mu_0 - A)^{\alpha}) = [\bold{X}, D(A)]_{\alpha},\ \forall \alpha \in [0,1].
\end{equation}
\begin{equation}
\label{hyp5}
\tag{v}
\text{The operator}\ B\ \text{is in}\ \mathcal{L}(L^2(\T) ; \bold{X}_{-1/2}),
\end{equation}
For any $(u_0, \theta_0) \in \bold{X}$, any $h \in L^2(0,T;L^2(\T))$, there exists a unique solution $(u, \theta)$ of \eqref{eq:boussinesqLin} such that 
\begin{equation}
\label{eq:regsolution}
(u, \theta) \in H^1(0,T; \bold{X}_{-1/2}) \cap L^2(0,T; \bold{V}_0^{1}(\Omega) \times H^1(\Omega)) \cap C([0,T]; \bold{X}).
\end{equation}
\end{prop}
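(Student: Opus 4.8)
The plan is to verify the five hypotheses $(\mathcal{H}1)$–$(\mathcal{H}5)$ one after the other, exploiting heavily the block-triangular structure $A = \begin{pmatrix} A_S & A_C \\ (0) & A_L \end{pmatrix}$, and then invoke the abstract Cauchy theory (e.g.\ \cite{BT14}, \cite{TW09}) to produce the solution with the asserted regularity.

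First I would treat $(\mathcal{H}3)$, as it is the foundation for everything else. The Stokes operator $A_S = \nu \Pl \Delta$ with domain $D(A_S) = H^2(\Omega)^3 \cap \bold{V}_0^1(\Omega)$ is self-adjoint, negative, and generates an analytic semigroup on $\bold{V}_n^0(\Omega)$ (see \cite[Chapter IV]{BF13}); similarly $A_L = \alpha \Delta$ with Dirichlet conditions is self-adjoint, negative, and generates an analytic semigroup on $L^2(\Omega)$ (see \cite[Theorem 9.25]{Bre11}). Hence $A_0 = \mathrm{diag}(A_S, A_L)$ generates an analytic semigroup on $\bold{X} = \bold{V}_n^0(\Omega)\times L^2(\Omega)$. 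Since $A_1 = A_C$ is bounded, $A = A_0 + A_1$ is a bounded perturbation of a generator of an analytic semigroup, hence itself generates an analytic semigroup on $\bold{X}$; this gives $(\mathcal{H}3)$. For $(\mathcal{H}4)$, I would note that $A_0$ is self-adjoint negative with compact resolvent, so the fractional powers $(\mu_0 - A_0)^\alpha$ and the complex interpolation spaces $[\bold{X}, D(A_0)]_\alpha$ coincide by the classical theory (Lions–Magenes, \cite[Chapter 10]{TW09}); the bounded perturbation $A_1$ does not alter the domains of fractional powers for $\alpha$ away from the critical exponent, and a standard argument (e.g.\ using that $A_1$ is $A_0$-bounded with relative bound $0$) upgrades this to $A$. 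This yields $(\mathcal{H}4)$.

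Next, $(\mathcal{H}1)$ and $(\mathcal{H}2)$ follow from the compactness of the resolvent of $A$. Since $D(A) \hookrightarrow \bold{X}$ compactly (Rellich, as $D(A)\subset H^2(\Omega)^3$) and $A$ has nonempty resolvent set (being a generator), $(\mu_0 - A)^{-1}$ is compact, so $\Sp(A)$ consists of isolated eigenvalues of finite algebraic multiplicity, giving $(\mathcal{H}1)$. For completeness $(\mathcal{H}2)$, because of the triangular structure one can describe the spectrum and root vectors explicitly: $\Sp(A) = \Sp(A_S) \cup \Sp(A_L)$, and the root vectors of $A$ are obtained from the (complete, orthonormal) eigenbases of the self-adjoint operators $A_S$ and $A_L$, corrected by the off-diagonal coupling $A_C$ — concretely, for an eigenpair $(\lambda, \psi)$ of $A_L$ one solves $(A_S - \lambda)\varphi = -A_C\psi$ whenever $\lambda \notin \Sp(A_S)$ to build the corresponding root vector of $A$, and when $\lambda \in \Sp(A_S)$ one gets a Jordan chain of length at most two. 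Since the union of the two eigenbases is complete in $\bold{X}$, so is the family of root vectors of $A$; this is $(\mathcal{H}2)$. Finally $(\mathcal{H}5)$: by construction $B h = (0,0,(-\tilde{A_L})\D h)$ with $\D \in \mathcal{L}(L^2(\T);H^1(\Omega))$, so $(-\tilde{A_L})\D h \in (D(A_L^{1/2}))' \subset L^2(\Omega)_{-1/2}$, which identifies $B$ as an element of $\mathcal{L}(L^2(\T);\bold{X}_{-1/2})$; admissibility then follows from \cite[Chapter 4]{TW09} since $A$ generates an analytic semigroup and $B\in\mathcal{L}(L^2(\T);\bold{X}_{-1/2})$ with $-1/2 > -1$ (the standard parabolic admissibility criterion). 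Once the five hypotheses hold, the existence, uniqueness and regularity \eqref{eq:regsolution} of the solution is exactly the conclusion of the abstract well-posedness theorem for analytic-semigroup control systems with admissible $B$ in $\bold{X}_{-1/2}$ (maximal parabolic regularity), applied to \eqref{eq:boussinesqabstract}.

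The main obstacle I anticipate is $(\mathcal{H}4)$, the identification of the domains of fractional powers with the interpolation spaces in the presence of the Stokes operator: for the Stokes system this is delicate near the exponent $\alpha = 1/4$ (where boundary conditions enter the characterization of $[\bold{X},D(A)]_\alpha$), and one must be careful that the Leray projector and the divergence-free constraint are compatible with interpolation. I would handle this by reducing to the self-adjoint diagonal part $A_0$, where the identification is classical, and then controlling the bounded perturbation $A_C$; the secondary subtlety is making the heuristic ``triangular spectrum'' argument for $(\mathcal{H}2)$ rigorous, in particular checking that no spurious generalized eigenvectors appear and that finite-length Jordan chains suffice, which again follows from the analyticity of the semigroup (eigenvalues have uniformly bounded algebraic multiplicity along the two sequences).
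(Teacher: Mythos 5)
Your treatment of \eqref{hyp1}, \eqref{hyp3}, \eqref{hyp4}, \eqref{hyp5} and of the final well-posedness statement follows essentially the same route as the paper: $A$ is viewed as a bounded perturbation of the negative self-adjoint diagonal operator $A_0 = \mathrm{diag}(A_S, A_L)$, which has compact resolvent, and the standard references are invoked for analyticity, interpolation and maximal regularity. The one place where you genuinely diverge is \eqref{hyp2}. The paper deduces completeness of the root vectors from Keldysh's theorem (as formulated in \cite[Remark 2.2]{BT14}), using only that $A$ is a bounded perturbation of a self-adjoint operator whose eigenvalues satisfy a summability condition; you instead construct the root vectors explicitly from the block-triangular structure, solving $(A_S - \lambda)\varphi = -A_C\psi$ for each eigenpair $(\lambda,\psi)$ of $A_L$ and accepting a Jordan chain of length at most two in the resonant case $\lambda \in \Sp(A_S)\cap\Sp(A_L)$. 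Your route is more elementary and yields structural information that the paper only recovers later by separation of variables in \Cref{prop:spectral}, but it requires two checks you should make explicit: solvability in the resonant case (which holds because $A_S$ is self-adjoint, so $\mathrm{Ran}\left((A_S-\lambda)^2\right) = \mathrm{Ran}(A_S-\lambda) = \ker(A_S-\lambda)^{\perp} \ni (A_S-\lambda)A_C\psi$), and the density argument, which must be run on finite linear combinations so that the correction terms $(A_S-\lambda_k)^{-1}A_C\psi_k$ need not be controlled uniformly in $k$. Keldysh's argument is more robust (it survives a non-triangular bounded coupling) but gives no such description; both are acceptable here.

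One caveat on \eqref{hyp5}: the inclusion $(-\tilde{A_L})\D h \in (D(A_L^{1/2}))'$ does not follow from $\D h \in H^1(\Omega)$ alone. The extrapolated operator $\tilde{A_L}$ applied to $\D h$ is not the distributional Laplacian (which vanishes, $\D h$ being $\alpha$-harmonic); it is defined by duality against $D(A_L)$ and carries the boundary pairing of $h$ with the normal derivative of the test function, whose continuity requires testing against $D(A_L^{s})$ for $s>3/4$ rather than $s=1/2$. Likewise, for analytic semigroups the admissibility threshold is membership in $\bold{X}_{-\beta}$ for $\beta<1/2$, not $\beta<1$, so the criterion ``$-1/2>-1$'' is not the relevant one. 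The paper sidesteps both issues by citing \cite[Proposition 10.7.1]{TW09} directly for the Dirichlet control operator of the heat equation, which is the safe formulation; your sketch of \eqref{hyp5} should be replaced by that citation or by the correct fractional-power bookkeeping.
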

\begin{proof}
\indent We see that $A$, defined in \eqref{eq:defA}, is a bounded perturbation on $\bold{X}$ of the self-adjoint operator $A_0$, which has compact resolvent. Then, we readily deduce that $A$ has also compact resolvent, so \eqref{hyp1} holds, see \cite[Section 6.3, Theorem 6.8]{Bre11}\\
\indent Moreover, according to Keldy's Theorem, see \cite[Remark 2.2]{BT14}, we have that \eqref{hyp2} holds. Indeed, $A$ is a bounded perturbation of the self-adjoint operator $A_0$, whose spectrum $(\mu_j)$ is exactly given by the spectrum of the Stokes operator and the Dirichlet Laplacian operator. So, we can easily check that
\begin{equation*}
\sum\limits_{j \in \N} \frac{1}{|\mu_j|} < + \infty,
\end{equation*}
see \cite[Lemma 2.2, Lemma 2.3]{CMR18} to get the asymptotic of the eigenvalues of the Stokes operator, see \Cref{prop:spectral} below to get to get the asymptotic of the eigenvalues of the Dirichlet Laplacian operator. \\
\indent By using the well-known fact that $A_0$ generates an analytic semigroup on $\bold{X}$ and $A_1$ is a bounded operator on $\bold{X}$, we deduce from \cite[Section 3.2, Theorem 2.1]{Paz83} that \eqref{hyp3} holds.\\
\indent By taking $\mu_0 > \sup_{j \in \N} \text{Re}(\lambda_j)$, it is easy to see that 
\eqref{hyp4} holds because $A$ is a bounded perturbation of a negative self-adjoint operator, see \cite[Section II.1.6]{BDPDM07}.\\
\indent It is also classical to prove that \eqref{hyp5} holds, see \cite[Section 10.7, Proposition 10.7.1]{TW09}.\\
\indent By the hypotheses \eqref{hyp1}, \eqref{hyp2}, \eqref{hyp3}, \eqref{hyp4} and \eqref{hyp5} and applying classical results on parabolic systems (see for instance \cite[Section 4.2, Proposition 4.2.5]{TW09}), we deduce the well-posedness result of \Cref{prop:hypotheses}. 
\end{proof}

\subsection{Spectral analysis of the linearized Boussinesq operator} 
The goal of this section is to give some spectral properties associated to the adjoint of the linearized Boussinesq operator. In this part, we only make the assumption $\alpha \neq \nu$.
\begin{prop}
\label{prop:spectral}
The spectrum of $A^{*}$ consists of isolated real eigenvalues with finite algebraic multiplicity. More precisely, we have 
\begin{equation}
\Sp(A^*) = \Sp_{S} \cup \Sp_{L},
\end{equation}
where
\begin{equation}
\label{eq:defspectrumStokes}
\Sp_S := \cup_{k \in \Z} \cup_{j \in \N} \{\lambda_{k_j}\}, \text{such that}\ \mu_{k_j} := \sqrt{ k^2 - \frac{\lambda_{k_j}}{\nu}} =  i\tilde{\mu_{k_j}} \ \text{satisfies}
\end{equation}
\begin{equation}
\label{eq:eigenvalueStokes}
- \sinh(kL) \sinh(\tilde{\mu_{k_j}} L) \tilde{\mu_{k_j}}^2 + 2k (1 - \cosh(kL) \cosh( \tilde{\mu_{k_j}} L)) \tilde{\mu_{k_j}} + k^2 \sinh(kL) \sinh(\tilde{\mu_{k_j}} L) = 0,
\end{equation}
and 
\begin{equation}
\label{eq:dirichletvalue}
\Sp_L := \cup_{k \in \Z} \cup_{j \in \N} \left\{-\alpha k ^2 - \frac{j^2 \pi^2}{L^2}\right\}.
\end{equation}
The eigenfunctions associated to the $k$-mode satisfy the following ordinary differential equation 
\begin{align}
\notag&- \xi_k^{(6)} + \left( \frac{\lambda}{\alpha}  + \frac{\lambda}{\nu} + 3 k^2\right) \xi_k^{(4)}- \left(\left(  \frac{\lambda}{\nu} + 2k^2\right)\left(\frac{\lambda }{\alpha}+ k^2\right) + k^2\left(\frac{\lambda}{\nu} + k^2\right)\right) \xi_k''\\
& + k^2\left(\frac{\lambda}{\nu}+ k^2\right)\left(\frac{\lambda}{\alpha}+ k^2\right) \xi_k = 0\qquad \text{in}\ (0,L),
 \label{eq:equationxi}
\end{align}
with boundary values
\begin{align}
&\xi_k(0) = \xi_k(L) = \xi_k''(0) =  \xi_k''(L)= \xi_k'''(0) - \left(\frac{\lambda}{\alpha}+k^2\right) \xi_k'(0) = \xi_k'''(L) - \left(\frac{\lambda}{\alpha}+k^2\right)   = 0.
\label{eq:boundaryvaluesxi}
\end{align}
Moreover, the eigenfunctions associated to $\Sp_L$ are given by
\begin{equation}
\label{eq:dirichleteigenfunction}
(\psi_{1,k_j},
\psi_{2,k_j},
q_{k_j},
\xi_{k_j})
(x_1,x_2) = \left(0,0,0,
 \sin\left(\frac{j \pi}{L} x_2\right)\right) e^{i k x_1}.
\end{equation}
\end{prop}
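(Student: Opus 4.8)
The plan is to compute the adjoint operator $A^*$ explicitly, reduce the eigenvalue problem to an ODE system via Fourier expansion in the periodic variable $x_1$, and then analyze each Fourier mode separately. Since $A = A_0 + A_1$ with $A_0$ self-adjoint and $A_1$ bounded, $A^*= A_0 + A_1^*$ still has compact resolvent, so its spectrum consists of isolated eigenvalues of finite algebraic multiplicity; moreover since $A_S = \nu \Pl \Delta$ and $A_L = \alpha \Delta$ are self-adjoint with real spectrum and $A_C$ is a lower-triangular coupling, the block-triangular structure $A^* = \begin{pmatrix} A_S & (0) \\ A_C^* & A_L \end{pmatrix}$ forces $\Sp(A^*) \subset \Sp(A_S) \cup \Sp(A_L)$, hence all eigenvalues are real. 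The eigenvalues coming from the Dirichlet Laplacian block are the easy part: on each $k$-mode the temperature component alone satisfies $\alpha(\partial_{22} - k^2)\xi_k = \lambda \xi_k$ with $\xi_k(0)=\xi_k(L)=0$, giving $\lambda = -\alpha k^2 - j^2\pi^2/L^2$ and eigenfunctions $\sin(j\pi x_2/L)e^{ikx_1}$ with vanishing velocity and pressure, which is exactly \eqref{eq:dirichletvalue} and \eqref{eq:dirichleteigenfunction}.

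Next I would treat the Stokes part. Writing an eigenfunction of $A^*$ as $(\psi, q, \xi)$ with $\psi$ the (adjoint) velocity, $q$ the pressure, $\xi$ the temperature, the eigenvalue equation reads, mode by mode, as a coupled system: a Stokes-type resolvent equation $\nu(\partial_{22}-k^2)\psi_k - \nabla_k q_k = \lambda \psi_k$ together with $\mathrm{div}_k \psi_k = 0$, and a heat equation for $\xi_k$ whose source is the adjoint of the coupling term $A_C$. Eliminating the pressure (apply $\mathrm{div}_k$, or cross-differentiate the two momentum equations) and using incompressibility to express $\psi_{1,k}$ in terms of $\psi_{2,k}$, one reduces the velocity part to a fourth-order ODE for $\xi_k$ from the Stokes block and then, after incorporating the heat equation coupling, to the single sixth-order ODE \eqref{eq:equationxi}. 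The boundary conditions \eqref{eq:boundaryvaluesxi} come from: $\xi_k(0)=\xi_k(L)=0$ (Dirichlet on temperature); the no-slip conditions on $\psi$, which after the elimination translate into $\xi_k''(0)=\xi_k''(L)=0$; and the remaining conditions $\xi_k'''(0) - (\lambda/\alpha + k^2)\xi_k'(0) = 0$ (and similarly at $L$), which encode the adjoint coupling and the pressure normalization. The characteristic roots of \eqref{eq:equationxi} are $\pm k$ (double, from the Laplacian-squared factor coming from Stokes with pressure), $\pm\sqrt{\lambda/\nu + k^2}$, and $\pm\sqrt{\lambda/\alpha + k^2}$; writing the general solution as a combination of the corresponding exponentials/hyperbolics and imposing the six boundary conditions yields a $6\times 6$ determinant whose vanishing is the eigenvalue condition. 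Using the reflection symmetry $x_2 \mapsto L - x_2$ one splits solutions into even and odd parts and reduces to two $3\times 3$ determinants; one of them produces exactly the Stokes characteristic equation \eqref{eq:eigenvalueStokes} after substituting $\mu_{k_j} = \sqrt{k^2 - \lambda_{k_j}/\nu} = i\tilde\mu_{k_j}$ and simplifying with hyperbolic identities, while the other factor reproduces Dirichlet-type eigenvalues already accounted for in $\Sp_L$.

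The main obstacle is the bookkeeping of the pressure elimination and, above all, deriving the precise boundary conditions \eqref{eq:boundaryvaluesxi}: the last two conditions are not the obvious ones and require carefully tracking how the no-slip condition on $\psi$ and the coupling $A_C^*$ interact once $\psi$ is expressed through $\xi$ and $q$. Getting \eqref{eq:equationxi} is a matter of patient algebra (factor the symbol as $(\partial_{22}-k^2)\bigl((\partial_{22}-k^2) - \lambda/\nu\bigr)\bigl((\partial_{22}-k^2) - \lambda/\alpha\bigr)$ up to sign), but verifying that the determinant factorizes cleanly into \eqref{eq:eigenvalueStokes} times a harmless factor, with the correct normalization $\mu_{k_j} = i\tilde\mu_{k_j}$ and the correct identification of the two pieces of the spectrum, is the step most prone to error and where the symmetry argument must be invoked with care. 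Finally, one checks that these are genuinely all the eigenvalues — i.e. the union $\Sp_S \cup \Sp_L$ is exhaustive — by noting that any eigenvalue of $A^*$ must, by the block-triangular structure, restrict to an eigenvalue of the diagonal blocks on some mode, and that the resolvent equation is uniquely solvable off this set.
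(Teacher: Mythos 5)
Your overall strategy is the same as the paper's: expand in Fourier modes in $x_1$, split the spectrum according to whether the velocity part of the eigenfunction vanishes (giving $\Sp_L$ and the explicit eigenfunctions \eqref{eq:dirichleteigenfunction}) or not (giving the Stokes eigenvalues, which are real since the Stokes operator is self-adjoint), reduce the latter case to the sixth-order ODE \eqref{eq:equationxi} for $\xi_k$ with boundary conditions \eqref{eq:boundaryvaluesxi}, and characterize the eigenvalues by the vanishing of a $6\times6$ determinant built from a fundamental system of exponentials. Your suggestion to exploit the reflection $x_2\mapsto L-x_2$ to split that determinant into two $3\times3$ blocks is a reasonable variant of the paper's direct, computer-assisted factorization of $\det M(\mu)$. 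The boundary conditions are indeed the delicate bookkeeping step, but they do not involve any ``pressure normalization'': they come from $\xi_k=0$, $\psi_{2,k}=0$ and $\psi_{2,k}'=0$ at $x_2\in\{0,L\}$ (the last from no-slip on $\psi_{1,k}$ combined with incompressibility), pushed through the heat equation $\alpha\mu_2^2\xi_k-\alpha\xi_k''=\psi_{2,k}$.

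Two points need repair. First, your root count is wrong and internally inconsistent: the symbol of \eqref{eq:equationxi} factors as $-(r^2-k^2)(r^2-\mu_1^2)(r^2-\mu_2^2)$ (exactly the factorization you write at the end), so $\pm k$, $\pm\mu_1$, $\pm\mu_2$ are all \emph{simple} roots; $\pm k$ cannot be double, since that would make the characteristic polynomial of degree eight. Simplicity of the roots is precisely where the hypotheses $\alpha\neq\nu$ and $\lambda\neq0$ enter, and a double root would change the fundamental system and hence the whole determinant computation. Second, and more substantively, you assert without proof that the non-Stokes factors of the determinant are ``harmless'' or ``already accounted for in $\Sp_L$.'' The determinant also vanishes when $e^{\mu_2 L}=\pm1$, and to conclude that \eqref{eq:eigenvalueStokes} characterizes the Stokes part one must show that this case produces no eigenfunction with $\psi_{2,k}\neq0$: the paper does so by observing that the kernel of $M(\mu)$ then yields $\xi_k=A\sinh(\mu_2 x_2)$, whence $\psi_{2,k}=\alpha\mu_2^2\xi_k-\alpha\xi_k''=0$, contradicting the standing assumption of the Stokes case. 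That verification, together with the preliminary integration by parts showing $\lambda<-\nu k^2$ (so that $\mu_1$ is purely imaginary and the substitution $\mu_{k_j}=i\tilde{\mu}_{k_j}$ in \eqref{eq:eigenvalueStokes} makes sense), is missing from your outline and is needed to complete the identification of $\Sp_S$.
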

\begin{rmk}
\Cref{prop:spectral} essentially says that the spectrum of $A^*$ splits into two parts, corresponding respectively to the Stokes eigenvalues and the Dirichlet Laplacian eigenvalues. The Stokes eigenvalues are actually given by the eigenvalues of the operator $A_S$ defined in \eqref{eq:defA}. Let us mention that this spectrum has already been computed in \cite[Lemma 2.2, Item 4]{CMR18}.
\end{rmk}
\begin{proof}
Let $\lambda \in \C$, the equation $A^{*} \Phi = \lambda \Phi$ rewrites in its partial differential equation form
\begin{equation}
\label{eq:eigenfunction}
\left\{
\begin{array}{l l}
 \lambda \psi - \nu  \Delta \psi + \nabla q = 0 &\mathrm{in}\ \Omega,\\
\text{div}\ \psi = 0&\mathrm{in}\ \Omega,\\
\psi = 0 &\mathrm{in}\ \Gamma,\\
\lambda \xi - \alpha \Delta \xi = \psi_2 &\mathrm{in}\ \Omega,\\
\xi(t,x_1,0)= \xi(t,x_1,L)= 0&\mathrm{on}\ \T.
\end{array}
\right.
\end{equation}

We expand $(\Phi,q) = (\psi_1,\psi_2, \xi,q)$ into Fourier series in the $x_1$-variable
\begin{equation}
\label{eq:expandFourier}
(\Phi,q) =  (\psi_1,\psi_2, \theta,q)\ \text{with}\ 
\left\{
\begin{array}{l l}
\psi_1(x_1,x_2) = \sum_{k \in \Z} \psi_{1,k}(x_2) e^{ik x_1},& (x_1,x_2) \in \Omega,\\
\psi_2(x_1,x_2) = \sum_{k \in \Z} \psi_{2,k}(x_2) e^{ik x_1},& (x_1,x_2) \in \Omega,\\
\xi(x_1,x_2) = \sum_{k \in \Z} \xi_{k}(x_2) e^{ik x_1},& (x_1,x_2) \in \Omega,\\
q(x_1,x_2) = \sum_{k \in \Z} q_{k}(x_2) e^{ik x_1},& (x_1,x_2) \in \Omega.
\end{array}
\right.
\end{equation}
The eigenvalue problem \eqref{eq:eigenfunction} rewrites as follows
\begin{equation}
\label{eq:eigenvalueproblem}
\left\{
\begin{array}{l l}
(\lambda + \nu k^2) \psi_{1,k}(x_2) - \nu \psi_{1,k}''(x_2) + ik q_k(x_2) = 0 &\mathrm{in}\ (0,L),\\
(\lambda + \nu k^2) \psi_{2,k}(x_2) - \nu \psi_{2,k}''(x_2) + q_k'(x_2) = 0 &\mathrm{in}\ (0,L),\\
ik \psi_{1,k}(x_2) + \psi_{2,k}'(x_2) = 0 &\mathrm{in}\ (0,L),\\
\psi_{1,k}(0) = \psi_{1,k}(L) = \psi_{2,k}(0) = \psi_{2,k}(L) = 0,\\
(\lambda + \alpha k^2) \xi_k(x_2) - \alpha \xi_{k}''(x_2)= \psi_{2,k}(x_2) &\mathrm{in}\ (0,L),\\
\xi_k(0)= \xi_k(L) = 0.
\end{array}
\right.
\end{equation}
We distinguish two cases.\\
\indent \textit{Case 1: Dirichlet Laplacian eigenvalue, $\psi_{2,k} = 0$ in $(0,L)$.} From the third equation of \eqref{eq:eigenvalueproblem}, we find that $\psi_{1,k} = 0$ so $q_k = 0$ from the first equation of \eqref{eq:eigenvalueproblem}. Therefore, $\xi_k$ satisfies
\begin{equation*}
(\lambda + \alpha k^2) \xi_k(x_2) - \alpha \xi_{k}''(x_2)= 0,\ \mathrm{in}\ (0,L),\ 
\xi_k(0)= \xi_k(L) = 0.
\end{equation*}
This immediately gives the eigenfunction \eqref{eq:dirichleteigenfunction}, associated to the corresponding eigenvalue taken in \eqref{eq:dirichletvalue} and \eqref{eq:equationxi}, \eqref{eq:boundaryvaluesxi} trivially hold.\\
\indent \textit{Case 2: Stokes eigenvalue, $\psi_{2,k} \neq 0$.} By observing the first four equations of \eqref{eq:eigenfunction}, we remark that $\lambda$ is an eigenvalue of the (autoadjoint) Stokes operator. So $\lambda$ is real. From the third and the fourth equations of \eqref{eq:eigenvalueproblem}, we obtain that 
\begin{equation}
\label{eq:psi2Boundary}
\psi_{2,k}(0) = \psi_{2,k}(L) = \psi_{2,k}'(0) =  \psi_{2,k}'(L) = 0
\end{equation}
By using the first three equations of  \eqref{eq:eigenvalueproblem}, we obtain that $\psi_{2,k}$ satisfies the following ODE-equation 
\begin{equation}
\label{eq:equationpsi2}
\nu \psi_{2,k}^{(4)}(x_2) - (\lambda + 2 \nu k^2) \psi_{2,k}''(x_2) + k^2(\lambda + \nu k^2) \psi_{2,k}(x_2) = 0,\ x_2 \in (0,L).
\end{equation}
\indent If $\lambda \in [-\nu k^2,+\infty)$, then by multiplying \eqref{eq:equationpsi2} by $\psi_{2,k}$ and by integrating by parts using \eqref{eq:psi2Boundary}, we find
\begin{equation*}
\nu \int_0^L (\psi_{2,k}^{''})^2 +  (\lambda + 2 \nu k^2) \int_0^L (\psi_{2,k}')^2 + k^2(\lambda + \nu k^2)  \int_0^L \psi_{2,k}^2  = 0.
\end{equation*}
Then, we get $\psi_{2,k}' = 0$ in $(0,L)$ and by using another time \eqref{eq:psi2Boundary}, we obtain $\psi_{2,k} = 0$ in $(0,L)$ which contradicts our hypothesis $\psi_{2,k} = 0$. As a consequence, we will assume in the following that $\lambda \in (- \infty,-\nu k^2)$.\\
\indent From \eqref{eq:equationpsi2} and by using the fifth equation of \eqref{eq:eigenvalueproblem} and \eqref{eq:equationpsi2}, we obtain that $\xi_k$ satisfies the ordinary differential equation of order six \eqref{eq:equationxi} and the boundary conditions \eqref{eq:boundaryvaluesxi}.\\
\indent By denoting 
\begin{equation}
\label{eq:defmu1defmu2}
\mu_1 := \sqrt{k^2 +\frac{ \lambda}{\nu}},\ \mu_2 := \sqrt{k^2 +\frac{ \lambda}{\alpha}},
\end{equation}
the roots of the characteristic equation associated to the ODE \eqref{eq:equationxi} are
\[ \pm k\ \text{with multiplicity one}\ , \pm \mu_1\ \text{with multiplicity one}\ , \pm \mu_2\ \text{with multiplicity one}.\]
Here, we have used that $\alpha \neq \nu$.\\
\indent Then, we deduce that a fundamental basis of the ODE associated to \eqref{eq:equationxi} is 
\begin{equation}
\label{eq:defbasis}
(e^{\pm kx}, e^{\pm \mu_1 x}, e^{\pm \mu_2 x}).
\end{equation}
The eigenvalue problem associated to \eqref{eq:equationxi} and \eqref{eq:boundaryvaluesxi} is equivalent to
\begin{equation}
\label{eq:det0Bis}
\det(M(\mu))= 0,
\end{equation}
where $M(\mu)$ is equal to
\footnotesize
\begin{equation} 
\label{defmatrixdiffneq}
\begin{pmatrix}
1 & 1 & 1 & 1  & 1 & 1\\
e^{kL} & e^{-kL} & e^{\mu_1 L} & e^{-\mu_1 L} &  e^{\mu_2 L} & e^{-\mu_2 L}\\
k^2 & k^2 & \mu_1^2 & \mu_1^2 & \mu_2^2 & \mu_2^2\\
k^2 e^{kL} & k^2 e^{-kL} &\mu_1^2 e^{\mu_1 L}  &\mu_1^2 e^{-\mu_1 L} &\mu_2^2 e^{\mu_2 L}  &\mu_2^2 e^{-\mu_2 L}\\
 k^3 - k \mu_2^2 & k \mu_2^2 - k^3 & \mu_1^3 - \mu_1 \mu_2^2 & \mu_1 \mu_2^2 - \mu_1^3 & 0 & 0\\
e^{kL} (k^3 - k\mu_2^2) & e^{-kL} (k \mu_2^2 - k^3) & e^{\mu_1L} (\mu_1^3 - \mu_1\mu_2^2) & e^{-\mu_1 L} (\mu_1 \mu_2^2 - \mu_1^3) & 0 & 0
\end{pmatrix}.
\end{equation}
\normalsize A computation, performed in \textit{Maxima}, leads to 
\begin{align}
\det(M(\mu)) & = \left(\mu_1 e^{ \mu_1 L + k L} - k e^{ \mu_1 L + kL} - \mu_1 e^{\mu_1 L} - k e^{\mu_1 L} + e^{kL} \mu_1 - \mu_1 + k e^{kL} + k \right)\notag\\
&\left(\mu_1 e^{ \mu_1 L + k L} - k e^{ \mu_1 L + kL} + \mu_1 e^{\mu_1 L} + k e^{\mu_1 L} - e^{kL} \mu_1 - \mu_1 - k e^{kL} + k \right)\notag\\
& (\mu_2 -k )^2 (\mu_2+ k)^2 (\mu_2 - \mu_1)^2 (\mu_2 + \mu_1)^2 (e^{\mu_2 L} -1) (e^{\mu_2 L} +1) e^{- L \mu_2 - L \mu_1 - k L}. \label{eq:computationdet}
\end{align}
For obtaining \eqref{eq:det0Bis}, one of the factor of \eqref{eq:computationdet} is necessary equal to $0$.\\
\indent First, we remark that $\mu_2 \neq \pm k$ and $\mu_2 \neq \pm \mu_1$ because $\lambda \neq 0$ and $\alpha \neq \nu$. \\
\indent Secondly, the case $e^{\mu_2 L} = \pm 1$ corresponds to
\begin{equation}
\label{eq:findmu}
\sin(\tilde{\mu_2} L ) = 0,\ \text{i.e.}\ \tilde{\mu_2} =\frac{ j \pi}{L},\ l \in \Z,\ \text{with}\ \tilde{\mu_2} = i \mu_2.
\end{equation}
Then, by using the comatrix formula and by remarking that the last two columns of the matrix \eqref{defmatrixdiffneq} are equal in this case, we have
\begin{equation}
\label{eq:xiksinh}
\xi_k(x) = A \sinh(\mu_2 x ) = i A  \sin(\tilde{\mu_2} x),\ A \in \C.
\end{equation}
Now, by using the fifth equation of \eqref{eq:eigenvalueproblem} and \eqref{eq:xiksinh}, we obtain that $ \psi_{2,k}= - \alpha \xi_k'' + \alpha \mu_2^2 \xi_k =0$, which contradicts our assumption $\psi_{2,k} \neq 0$.\\
\indent We can check by a straightforward but lengthy computation that the product of the first two factors of \eqref{eq:computationdet} is equal to
\begin{equation*}
4 e^{L \mu_1 + L k} \left(  \sinh(kL) \sinh(\mu_1 L) \mu_1^2 -2 k (1 - \cosh(kL) \cosh( \mu_1 L)) \mu_1 + k^2 \sinh(kL) \sinh(\mu_1 L)\right),
\end{equation*}
then by setting $\mu_1 = i \tilde{\mu_1}$, we obtain the condition \eqref{eq:eigenvalueStokes}.
\end{proof}
\subsection{Fattorini-Hautus test: proof of \Cref{th:mainresult1}}
By using \cite[Theorem 1.1]{BT14} and \eqref{hyp1}, \eqref{hyp2}, \eqref{hyp3}, \eqref{hyp4} and \eqref{hyp5}, proving \Cref{th:mainresult1} reduces to show the following unique continuation property
\begin{equation}
\label{eq:HautusTest}
\text{If}\ A^* \Phi = \lambda \Phi\ \text{for some}\ \lambda \in \C\ \text{and if}\ B^* \Phi = 0,\ \text{then}\ \Phi = 0,
\end{equation}
where $\Phi$ is expanded in Fourier modes, without $0$-mode.\\
\indent The computation of $B^*\Phi$ yields to 
\begin{equation}
\label{eq:ComputeBstar}
B^* \Phi(x_1) = \frac{\partial \xi}{\partial x_2}(x_1,L) = \sum_{k \in \Z} \xi_{k}(L) e^{ik x_1},\ x_1 \in \T.
\end{equation}
By gathering \eqref{eq:HautusTest} and \eqref{eq:ComputeBstar}, we obtain that 
\begin{equation}
\label{eq:ComputeBstarBis}
\forall k \in \Z^{*},\ \xi_k'(L) = 0.
\end{equation}
\begin{prop}
\label{prop:uniquecontinuation}
There exists a countable subset $\mathcal{N}$ of $(0,\nu)$ such that for every $\alpha \in (0, \nu) \setminus \mathcal{N}$, assuming that $\Phi$ satisfies $A^{*} \Phi = \lambda \Phi$ for some $\lambda \in \R$ and $B^{*} \Phi = 0$ and expanding $(\Phi,q)$ as in \eqref{eq:expandFourier}, for all $k \in \Z \setminus \{0\}$, we have $\psi_{1,k} = \psi_{2,k} = \xi_k = q_k = 0$ everywhere in $(0,L)$.
\end{prop}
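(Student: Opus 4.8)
I plan to argue mode by mode. Fix $k \in \Z \setminus \{0\}$ and, following the dichotomy in the proof of \Cref{prop:spectral}, distinguish according to whether $\psi_{2,k}$ is identically zero.

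\emph{Dirichlet modes.} If $\psi_{2,k} \equiv 0$ then the third and the first equations of \eqref{eq:eigenvalueproblem} give $\psi_{1,k} \equiv 0$ (here $k \neq 0$) and $q_k \equiv 0$, while $\xi_k$ solves $(\lambda + \alpha k^2)\xi_k - \alpha \xi_k'' = 0$ together with $\xi_k(0) = \xi_k(L) = 0$; hence either $\xi_k \equiv 0$, or $\lambda$ is one of the $k$-mode Dirichlet eigenvalues \eqref{eq:dirichletvalue} and $\xi_k(x_2) = c\sin(j\pi x_2/L)$ for some $j \in \N$ and $c \in \C$. In the latter case the extra condition \eqref{eq:ComputeBstarBis}, i.e. $\xi_k'(L) = 0$, reads $c(j\pi/L)(-1)^j = 0$, so $c = 0$. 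Thus the whole $k$-th mode vanishes, with no restriction on $\alpha$.

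\emph{Stokes modes: reduction to a $3\times 3$ matrix.} Suppose now $\psi_{2,k} \not\equiv 0$, so that, by Case~2 of \Cref{prop:spectral}, $\lambda \in \R$ is a $k$-mode Stokes eigenvalue, it is independent of $\alpha$, and $\lambda < -\nu k^2$. The only place the hypothesis $\alpha < \nu$ enters is the observation that $\mu_2^2 = k^2 + \lambda/\alpha < k^2 + \lambda/\nu = \mu_1^2 < 0$: writing $\mu_j = i\tilde\mu_j$ with $0 < \tilde\mu_1 < \tilde\mu_2$, the fundamental system \eqref{eq:defbasis} then consists of the six bounded functions $\cosh(kx_2), \sinh(kx_2), \cos(\tilde\mu_1 x_2), \sin(\tilde\mu_1 x_2), \cos(\tilde\mu_2 x_2), \sin(\tilde\mu_2 x_2)$. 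Since \eqref{eq:equationxi}, \eqref{eq:boundaryvaluesxi} and the relation $\psi_{2,k} = (\lambda + \alpha k^2)\xi_k - \alpha\xi_k''$ are invariant under $x_2 \mapsto L - x_2$ (with $\psi_{2,k}$ and $\xi_k$ changing sign), and since the two factors of \eqref{eq:computationdet} are, up to nonzero factors, the Stokes eigenvalue conditions for eigenfunctions respectively even and odd about $x_2 = L/2$, a short computation shows those two conditions cannot hold at once — they would force $\tilde\mu_1^2 = -k^2$. Excluding in addition the countably many $\alpha$ for which $\lambda$ is also a $k$-mode Dirichlet eigenvalue \eqref{eq:dirichletvalue}, the $k$-mode eigenspace for $\lambda$ is one-dimensional and of a single parity; treating the even case (the odd one is symmetric), one has $\xi_k(x_2) = c_1\cosh\!\big(k(x_2 - \tfrac{L}{2})\big) + c_2\cos\!\big(\tilde\mu_1(x_2 - \tfrac{L}{2})\big) + c_3\cos\!\big(\tilde\mu_2(x_2 - \tfrac{L}{2})\big)$.

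\emph{Non-degeneracy of the matrix.} For such an even $\xi_k$, the six conditions \eqref{eq:boundaryvaluesxi} reduce to $\xi_k(L) = 0$, $\xi_k''(L) = 0$ and $\xi_k'''(L) - (\lambda/\alpha + k^2)\xi_k'(L) = 0$ — the even Stokes eigenvalue system, equivalent to the identity $\tilde\mu_1\tan(\tilde\mu_1 L/2) = -k\tanh(kL/2)$ — and \eqref{eq:ComputeBstarBis} adds the new equation $\xi_k'(L) = 0$. I would then consider the $3\times 3$ linear system in $(c_1, c_2, c_3)$ coming from $\xi_k(L) = \xi_k'(L) = \xi_k''(L) = 0$; using $k^2 + \tilde\mu_1^2 = -\lambda/\nu$, $\tilde\mu_1^2 - \tilde\mu_2^2 = \lambda(\nu - \alpha)/(\alpha\nu)$ and the even eigenvalue identity to annihilate two of its three $2\times 2$ minors, its determinant equals, up to a nonzero multiplicative constant independent of $\alpha$ (note $\lambda \neq 0$ and $\cos(\tilde\mu_1 L/2) \neq 0$, the latter since otherwise $\tilde\mu_1\tan(\tilde\mu_1 L/2)$ would be infinite),
\[
k\,\sinh\!\left(\frac{kL}{2}\right)\cos\!\left(\frac{\tilde\mu_2 L}{2}\right) + \tilde\mu_2\,\cosh\!\left(\frac{kL}{2}\right)\sin\!\left(\frac{\tilde\mu_2 L}{2}\right).
\]
Now $\lambda$ and $\tilde\mu_1$ are fixed, whereas $\alpha \in (0,\nu) \mapsto \tilde\mu_2(\alpha) = \sqrt{-k^2 - \lambda/\alpha}$ is a real-analytic decreasing bijection onto $(\tilde\mu_1, +\infty)$; since the displayed expression, regarded as a function of $\tilde\mu_2$, does not vanish whenever $\cos(\tilde\mu_2 L/2) = 0$, it is not identically zero, so this $3\times 3$ determinant is a non-trivial real-analytic function of $\alpha$ on $(0,\nu)$, with discrete zero set. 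For $\alpha$ outside that zero set the matrix is invertible, which forces $c_1 = c_2 = c_3 = 0$, i.e. $\xi_k \equiv 0$ and hence $\psi_{2,k} \equiv 0$, contradicting $\psi_{2,k} \not\equiv 0$. So, outside a countable set of values of $\alpha$, no $k$-mode Stokes eigenvalue occurs.

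\emph{Conclusion and main difficulty.} Let $\mathcal{N}$ be the union, over $k \in \Z\setminus\{0\}$, over the (countably many) $k$-mode Stokes eigenvalues, and over the two parities, of the zero sets of the $3\times 3$ determinants above, together with the countably many $\alpha$ for which a $k$-mode Stokes eigenvalue coincides with a $k$-mode Dirichlet eigenvalue; this is a countable subset of $(0,\nu)$. For $\alpha \in (0,\nu)\setminus\mathcal{N}$, every mode $k \neq 0$ is a Dirichlet mode and therefore $\psi_{1,k} = \psi_{2,k} = \xi_k = q_k \equiv 0$, which is the assertion. The one genuinely delicate point is the last non-degeneracy statement: one has to exclude the possibility that the scalar constraint $\xi_k'(L) = 0$ is automatically satisfied by the Stokes eigenfunctions for \emph{every} admissible $\alpha$, and this is exactly what the explicit factorization of the $3\times 3$ determinant, combined with the oscillation in $\tilde\mu_2$ of the bracket above, rules out.
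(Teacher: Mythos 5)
Your argument is correct in substance but follows a genuinely different route from the paper's. The paper never decomposes the eigenspace by parity: it multiplies the sixth-order equation \eqref{eq:equationxi} by auxiliary solutions $f(x)=A\sinh(kx)+B\sinh(\mu_1 x)+C\sinh(\mu_2 x)$ of the \emph{same} equation, integrates by parts using \eqref{eq:boundaryvaluesxi} and \eqref{eq:ComputeBstarBis} to get $-\xi_k^{(5)}(L)f(L)+\xi_k^{(4)}(L)f'(L)-\xi_k^{(4)}(0)f'(0)=0$, and then shows that the $3\times3$ matrix sending $(A,B,C)$ to $(f'(0),f(L),f'(L))$ is generically invertible (non-vanishing of its determinant for large $\tilde\mu_2$ via Rouch\'e), which yields $\xi_k^{(4)}(L)=\xi_k^{(5)}(L)=0$ and hence vanishing Cauchy data at $x_2=L$ and $\xi_k\equiv0$ by Cauchy--Lipschitz. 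You instead exploit the reflection symmetry about $x_2=L/2$ to write $\xi_k$ itself as a three-parameter combination of a single parity and impose $\xi_k(L)=\xi_k'(L)=\xi_k''(L)=0$ directly; your reduction of the resulting determinant to $k\sinh(kL/2)\cos(\tilde\mu_2 L/2)+\tilde\mu_2\cosh(kL/2)\sin(\tilde\mu_2 L/2)$ (times $(\lambda/\nu)\cos(\tilde\mu_1 L/2)\neq0$) is correct, and your non-vanishing argument --- evaluating at the zeros of $\cos(\tilde\mu_2 L/2)$ --- is cleaner and more explicit than the paper's Rouch\'e argument near infinity. What your route costs is structural: you must know that the $k$-mode Stokes eigenspace is one-dimensional and of a definite parity (your ``short computation'' that the even and odd eigenvalue conditions $\tilde\mu_1\tan(\tilde\mu_1 L/2)=-k\tanh(kL/2)$ and $\tilde\mu_1\cot(\tilde\mu_1 L/2)=k\coth(kL/2)$ cannot hold simultaneously is true but should be written out, since the whole three-parameter ansatz rests on it), and you must separately exclude the countably many $\alpha$ for which a Stokes eigenvalue coincides with a $k$-mode Dirichlet eigenvalue, inflating $\mathcal{N}$. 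The paper's duality argument is indifferent to the dimension and parity of the eigenspace, which is precisely why it needs neither of these extra steps. Also, a small imprecision: the eigenvalue identity annihilates only one of the three cofactors in your column expansion (the one multiplying $\tilde\mu_2^2\cos(\tilde\mu_2 L/2)$); the other two merely acquire the common factor $(\tilde\mu_1^2+k^2)\cos(\tilde\mu_1 L/2)$ --- the final formula is nonetheless the one you state.
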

\begin{proof}
We prove the Fattorini-Hautus criterion by distinguishing two cases.\\
\indent \textit{Case 1: Dirichlet Laplacian eigenvalue.} If $\lambda \in \Sp_L$, we know the explicit form of the eigenfunction given by \eqref{eq:dirichleteigenfunction}, $\xi_k(x_2) = A \sin((j \pi /L) x_2)$ for some $A \in \C$, $j \in \N^{*}$. The observation \eqref{eq:ComputeBstarBis} immediately leads to $A = 0$.\\
\indent \textit{Case 2: Stokes eigenvalue.} We assume that $\lambda \in \Sp_{S}$.\\
\indent Let us multiply the first equation of \eqref{eq:equationxi} by $f$ where $f$ also satisfies the equation \eqref{eq:equationxi} and integrate by parts, using the boundary conditions \eqref{eq:boundaryvaluesxi} and \eqref{eq:ComputeBstarBis},
\begin{align}
&- \xi_k^{(5)}(L) f(L) + \xi_k^{(5)}(0) f(0) +  \xi_k^{(4)}(L) f'(L) - \xi_k^{(4)}(0) f'(0) + \xi_k^{(3)}(0) f''(0) - \xi_k'(0) f^{(4)}(0)\notag\\
& - \left( \frac{\lambda}{\alpha}  + \frac{\lambda}{\nu} + 3 k^2\right) \left( \xi_k^{(3)}(0) f(0)  + \xi_k'(0) f^{(2)}(0)\right)\notag\\
& + \left(\left(  \frac{\lambda}{\nu} + 2k^2\right)\left(\frac{\lambda }{\alpha}+ k^2\right) + k^2\left(\frac{\lambda}{\nu} + k^2\right)\right)\xi_k'(0) f(0) = 0.\label{eq:equationxif}
\end{align}
By recalling the notation \eqref{eq:defmu1defmu2} and the fundamental basis \eqref{eq:defbasis} of the ordinary differential equation \eqref{eq:equationxi}, we take $f$ of the following form
\begin{equation}
\label{eq:defMultiplyf}
f(x) = A \sinh(kx) + B \sinh(\mu_1 x) + C\sinh(\mu_2 x), \ (A,B,C) \in \R^3,
\end{equation}
We remark that $f(0) = f''(0) = f^{(4)}(0) = 0$, then we deduce that \eqref{eq:equationxif} becomes
\begin{equation}
\label{eq:equationxifbis}
- \xi_k^{(5)}(L) f(L) +  \xi_k^{(4)}(L) f'(L) - \xi_k^{(4)}(0) f'(0) = 0.
\end{equation}
\indent The next step is to prove that one can choose $f$ such that
\begin{equation*}
\label{eq:conditionf}
f'(0) = f(L) = 0,\ f'(L) \neq 0,
\end{equation*}
in order to obtain $\xi_k^{(4)}(L) = 0$ and one can also choose (another) $f$ such that
\begin{equation*}
\label{eq:conditionfBis}
f'(0) = f'(L) = 0,\ f(L) \neq 0,
\end{equation*}
in order to obtain $\xi_k^{(5)}(L) = 0$. Hence, $\xi_k$ satisfies the sixth order differential equation \eqref{eq:equationxi} with Cauchy data
\begin{equation*}
\xi_k(L) = \xi_k'(L) = \xi_k^{(2)}(L) = \xi_k^{(3)}(L) = \xi_k^{(4)}(L) = \xi_k^{(5)}(L) = 0,
\end{equation*}
which gives us $\xi_k = 0$ in $(0,L)$ by Cauchy-Lipschitz theorem, then as before $\psi_{2,k} = \psi_{1,k} = q_k = 0$ in $(0,L)$, which concludes the proof of \Cref{prop:uniquecontinuation}.\\
\indent It is then sufficient to check that the following matrix is invertible
\begin{equation}
\label{eq:defR}
R:= \begin{pmatrix}
k & \mu_1 & \mu_2\\
\sinh(kL) & \sinh(\mu_1 L) & \sinh(\mu_2 L)\\
k \cosh(kL) & \mu_1 \cosh(\mu_1L) & \mu_2 \cosh(\mu_2 L)
\end{pmatrix}.
\end{equation}
Indeed, by \eqref{eq:defMultiplyf}, we readily remark that 
\begin{equation*}
R \begin{pmatrix}
A\\
B\\
C
\end{pmatrix} = \begin{pmatrix}
f'(0)\\
f(L)\\
f'(L)
\end{pmatrix}.
\end{equation*}
Recall that from the proof of \Cref{prop:spectral}, we have $\lambda \in (- \infty,-\nu k^2)$, then $\mu_1 = i \tilde{\mu_1}$ and also $\mu_2 = i \tilde{\mu_2}$ because $\alpha < \nu$ with $(\tilde{\mu_1}, \tilde{\mu_2}) \in \R^2$. We compute the determinant
\begin{align*}
& F(\tilde{\mu_2}) := \det(R)=\\
&  \underbrace{\Big(\cos(\tilde{\mu_2} L) (k \sin(\tilde{\mu_1}L) -  \sinh(kL)\tilde{\mu_1}) + \sinh(kL) \tilde{\mu_1} \cos(\tilde{\mu_1}L)  -   k \cosh(kL) \sin(\tilde{\mu_1}L) \Big)}_{F^1(\tilde{\mu_2})} \tilde{\mu_2} \\
& + \tilde{\mu_1}\sin(\tilde{\mu_2} L)k \cosh(kL) - \tilde{\mu_1} \cos(\tilde{\mu_1}L) \sin(\tilde{\mu_2} L) k.
\end{align*}
We readily see that $F$ is an analytic function in a neighbourhood of infinity, which is not identically equal to zero. Indeed, by Rouché's theorem, for $\tilde{\mu_2}$ sufficiently large, the zeros of $F$ are exactly located in a neighbourhood of the zeros of $\tilde{\mu_2}F^1(\tilde{\mu_2})$. Moreover, $F^1$ is not identically equal to zero because the factor $k \sin(\tilde{\mu_1}L) -  \sinh(kL)\tilde{\mu_1} \neq 0$ because $\mu_1 \neq \pm k$. So, the set of zeros of $F$ is at most countable. Then, let us set
\begin{equation}
\label{eq:defN}
\mathcal{N} = \cup_{k \in \Z^*} \cup_{j \in \N} \left\{\alpha \in (0,\nu)\ ;\ F_{\lambda_{k_j}}\left(\sqrt{k^2 + \lambda_{k_j}/\alpha}\right) = 0\right\}.
\end{equation}
Let us mention that in \eqref{eq:defN}, we have underlined the dependence of $F$ in function of the eigenvalue $\lambda_{k_j}$ of the Stokes operator, defined in \eqref{eq:defspectrumStokes}. We have that $\mathcal{N}$ is a countable subset of $(0, \nu)$ because it is a countable union of countable sets. For every $\alpha \in (0,\nu) \setminus \mathcal{N}$, for all $\lambda_{k_j}$ eigenvalue of the Stokes operator, $F_{\lambda_{k_j}}(\tilde{\mu_2}) \neq 0$. So the matrix $R$, defined in \eqref{eq:defR}, is invertible for every $\alpha \in (0,\nu) \setminus \mathcal{N}$.
\end{proof}
\begin{rmk}
\label{rmk:restrictionalpha}
The restriction $\alpha < \nu$ seems to be purely technical in the previous proof. Indeed, we have used this assumption in order to have $\mu_2=i\tilde{\mu_2}$ purely imaginary, and to remark that $F(\tilde{\mu_2})$ is not identically equal to zero in a neighbourhood of infinity, i.e. $\alpha$ in a neighbourhood of zero. We cannot do the same in the limit $\alpha \rightarrow + \infty$ because in this case, we do not know how to prove that $\det(R)$, seen as an analytic function in $\mu_2$, is not identically equal to zero. Let us remark that $\alpha \rightarrow +\infty$ is equivalent to $\mu_2 \rightarrow \pm k$, so $\det(R) \rightarrow 0$ as $\alpha \rightarrow + \infty$.
\end{rmk}
\begin{rmk}
A more direct approach to prove \Cref{prop:uniquecontinuation} would be to explicitly compute $\xi_{k}'(L)$, where $\xi_k$ is defined as 
\begin{equation*}
\xi_k(x_2) = A_1 e^{k x_2} + A_{2} e^{-k x_2} + A_3 e^{\mu_1 x_2}  A_{4} e^{-\mu_1 x_2} + A_5 e^{\mu_2 x_2} + A_6 e^{-\mu_2 x_2},
\end{equation*}
with $(A_i)_{1 \leq i \leq 6}^T$ in the kernel of the matrix defined in \eqref{defmatrixdiffneq}. Then, $(A_i)_{1 \leq i \leq 6}$ can be computed explicitly by the comatrix formula for instance. Unfortunately, computations performed in \textit{Maxima} leads to an inextricable formula.
\end{rmk}
\begin{rmk}
\label{rmk:twocontrols}
If we add a control on the component $u_2$ on the upper boundary, i.e.  we consider \eqref{eq:boussinesqLin} with two controls $u_2(t,x_1,L) = h_1(t,x_1)$ and $\theta(t, x_1,L) = h_2(t,x_1)$, we prove that \Cref{prop:uniquecontinuation} holds then \Cref{th:mainresult1} holds without restriction on $\alpha \in (0, +\infty)$. Indeed, we can check that in this case, the observation \eqref{eq:ComputeBstarBis} becomes 
\begin{equation}
\label{eq:ComputeBstarTer}
\forall k \in \Z^{*},\ (q_k(L),\xi_k'(L)) = (0,0).
\end{equation}
By using the first, third, fifth equations of \eqref{eq:eigenvalueproblem} and the boundary conditions \eqref{eq:boundaryvaluesxi}, we obtain that $\xi_k$ satisfies
\begin{equation*}
\xi_k(L) = \xi_k'(L) = \xi_k^{(2)}(L) = \xi_k^{(3)}(L) = \xi_k^{(5)}(L) = 0.
\end{equation*}
So, \eqref{eq:equationxifbis} becomes
\begin{equation*}
\xi_k^{(4)}(L) f'(L) - \xi_k^{(4)}(0) f'(0) = 0.
\end{equation*}
Then, if we take $f(x) = \sinh(kx) - \frac{k}{\mu_1} \sinh(\mu_1 x)$, we check that $f$ satisfies $f'(0) = 0$ and $f'(L) \neq 0$ because $\mu_1 \neq \pm k$. This gives $\xi_k^{(4)}(L) = 0$ and we conclude as before.\\
\indent Another proof could be to use directly the unique continuation property of the Stokes system with the observation $q_k(L) = 0$ for every $k \in \Z^{*}$, see \cite[Proposition 2.1]{CE19}.
\end{rmk}

\section{Perspectives and related references}

\subsection{Null-controllability of the Boussinesq system}

A natural perspective that could be addressed in the future is the null-controllability of the linearized Boussinesq system \eqref{eq:boussinesqLin}. In this context, the techniques employed in the article \cite{CMR18}, which establishes the null-controllability of the incompressible Stokes equation, would be useful. The difficulty comes from the lengthy computations, due to the eigenvalue problem \eqref{eq:equationxi} and \eqref{eq:boundaryvaluesxi} associated to the ordinary differential equation of order six, instead of an ordinary differential equation of order four that appears for the Stokes problem, see \cite[Equation (2.4)]{CMR18}. Let us mention that the null-controllability of Boussinesq system with few internal controls, even in the nonlinear case, has been studied in the following articles \cite{FCGIP06}, \cite{GBGP09}, \cite{FCS12} and \cite{Car12}.

\subsection{Stabilization around a Poiseuille flow} 

We remark that another particular stationary solution of \eqref{eq:boussinesq} is given by a Poiseuille flow $(u_1, u_2, p, \theta, h) = (C x_2 (x_2-L), 0,0,0,0)$. As in \cite{Mun12}, we could try to get some exponential stabilization result for the linearized system around the previous stationary state, by a finite-dimensional feedback control acting on the temperature, through the upper boundary $\{x_2 = L\}$. Let us mention that the boundary stabilization of the Boussinesq system around unstable states, with feedback controllers for both velocity and temperature, has been studied in \cite{BT11}, \cite{Bad12} and more recently in \cite{RRR19}.

\subsection{Open loop-stabilization of the nonlinear Boussinesq system}
In the spirit of \cite{CE19}, a natural perspective would be to obtain some local open-loop stabilization result for the nonlinear Boussinesq system \eqref{eq:boussinesq} at any given decay rate. By looking at \eqref{eq:boussinesqLin0}, we see that any stabilization strategy based on the linearized system \eqref{eq:boussinesqLin} will fail to stabilize the system \eqref{eq:boussinesq} at a rate lower than $-\nu \pi^2/L^2$, corresponding to the first eigenvalue $\lambda_{0,1} = - \nu \pi^2/L^2$ of the operator $\nu \partial_{22}$ with Dirichlet boundary conditions on $(0,L)$.\\
\indent Let us explain briefly the approach of the authors in our context. It is based on a power series expansion, already used in \cite{CC04} in the context of KdV equation and \cite{BC06} for Schrödinger equation (see also \cite[Chapter 8]{Cor07} for an introduction to this method). We assume that the controlled solution $(u, \theta)$ and its control $h$ can be expanded as
\begin{equation*}
u = \varepsilon \alpha + \varepsilon^2 \beta,\ \theta = \varepsilon \theta_1 + \varepsilon^2 \theta_2,\ p = \varepsilon p_1 + \varepsilon^2 p_2,\ h= \varepsilon h_1 + \varepsilon^2 h_2,
\end{equation*}
for some $\varepsilon>0$ small enough, where $(\alpha, \beta)$, $(\theta_1, \theta_2)$, $(p_1, p_2)$, $(g_1,g_2)$ are all of order one. This allows to look for $(h_1,h_2)$ such that $(\alpha, \beta, \theta_1, \theta_2)$ 
\begin{equation}
\label{eq:boussinesqLinOrdre1}
\left\{
\begin{array}{l l}
\partial_t \alpha - \nu  \Delta \alpha + \nabla p_1 = \theta_1 e_2 &\mathrm{in}\ (0,\infty)\times\Omega,\\
\text{div}\ \alpha = 0&\mathrm{in}\ (0,\infty)\times\Omega,\\
\alpha = 0 &\mathrm{in}\ (0,\infty)\times\Gamma,\\
\partial_t \theta_1 - \alpha \Delta \theta_1 =0 &\mathrm{in}\ (0,\infty)\times\Omega,\\
\theta_1(t,x_1,0)= 0,\ \theta_1(t,x_1,L)= h_1(t,x_1)&\mathrm{on}\ (0,\infty)\times\T,\\
(\alpha,\theta_1)(0,\cdot)=(\alpha^0,\theta_1^0)& \mathrm{in}\ \Omega,
\end{array}
\right.
\end{equation}
\begin{equation}
\label{eq:boussinesqQuadOrdre2}
\left\{
\begin{array}{l l}
\partial_t \beta - \nu  \Delta \beta + \nabla p_2 = - (\alpha + \varepsilon \beta) \cdot \nabla (\alpha + \varepsilon \beta )+  \theta_2 e_2 &\mathrm{in}\ (0,\infty)\times\Omega,\\
\text{div}\ \beta = 0&\mathrm{in}\ (0,\infty)\times\Omega,\\
\beta = 0 &\mathrm{in}\ (0,\infty)\times\Gamma,\\
\partial_t \theta_2 - \alpha \Delta \theta_2 = - (\alpha + \varepsilon \beta) \cdot \nabla (\theta_1 + \varepsilon \theta_2) &\mathrm{in}\ (0,\infty)\times\Omega,\\
\theta_2(t,x_1,0)= 0,\ \theta_2(t,x_1,L)= h_2(t,x_1)&\mathrm{on}\ (0,\infty)\times\T,\\
(\beta,\theta_2)(0,\cdot)=(\beta^0,\theta_2^0)& \mathrm{in}\ \Omega,
\end{array}
\right.
\end{equation}
is stable and decays exponentially at rate $-\omega_0$.\\
\indent As the control function $h_1$ cannot act on the $0$-mode of $(\alpha,\theta_1)$, see \eqref{eq:boussinesqLin0}, we would put the unstable undetectable part of component of $(u,\theta)$ in the $(\beta, \theta_2)$ part. Our construction would therefore use the nonlinear term in \eqref{eq:boussinesqQuadOrdre2} to indirectly control the projection of $\beta$ on the unstable undetectable part.\\

\textbf{Acknowledgements:} I thank Sylvain Ervedoza and Marius Tucsnak for interesting discussions on this article. I also thank Karine Beauchard for reading a preliminary draft of this article.

\bibliographystyle{alpha}
\bibliography{UniqueContinuationBoussinesq}

\end{document}